\newcommand{\exn}{{\bf E}}
\newcommand{\R}{\mathbb{R}}
\newcommand{\be}{\beta}
\newcommand{\ga}{\gamma}
\newcommand{\si}{\sigma}
\newcommand{\De}{\Delta}
\newcommand{\f}{\varphi}
\newcommand{\deq}{\stackrel{d}{=}}
\newcommand{\ep}{\varepsilon}
\newcommand{\ind}{{\bf 1}}
\newtheorem{theo}{Theorem}
\newtheorem{lemo}{Lemma}
\title{A note on recovering the Brownian motion component from a L\'evy process}
\author{Konstantin Borovkov$^1$ }
\date{}
\begin{document}
	\maketitle

\footnotetext[1]{School of Mathematics and Statistics, The University of Melbourne, Parkville 3010, Australia; e-mail: borovkov@unimelb.edu.au.}

\begin{abstract}
Gonz\'alez C\'azares  and  Ivanovs~(2021) suggested a new method for ``recovering'' the Brownian motion component from the trajectory of a L\'evy process that required sampling from an independent Brownian motion process. We show that such a procedure works equally well without any additional source of randomness if one uses normal quantiles  instead of the ordered increments of the auxiliary Brownian motion process.

		\smallskip
		{\it Key words and phrases:} Brownian motion, L\'evy process, quantile function, high frequency sampling. 
		
		\smallskip
		{\em AMS Subject Classification:} 60F99, 60G51, 60J65.
	\end{abstract}

\section{Introduction and main results}

The present note is complementing  recent interesting paper~\cite{GoIv21} (see also related paper~\cite{FoGoIv21}) presenting an original idea on how to ``recover'' the Brownian motion component from the observed L\'evy process 
\[
X_t=Y_t + \si W_t, \quad t\in [0,1],
\]
where the standard Brownian motion $W$ is independent of the pure jump process~$Y$ and $\si>0.$ More precisely, the authors of~\cite{GoIv21} recovered the path of the Brownian bridge $\{W_t - t W_1\}_{t\in [0,1]}$ (noting that it is not possible to consistently ``extract'' the linear drift from the Brownian motion process trajectory, due to the equivalence of the distributions of Brownian motion processes with different linear drifts). 

Apart from being an interesting mathematical result by itself, such a separation can be useful in some applications as well. For instance, if the Brownian component is interpreted as noise, it enables one to recover (up to a linear drift) the signal~$Y$ from the observed process~$X$. Further comments (and some relevant references) on how one can benefit from such a  separation in statistical problems can be found in~\cite{GoIv21}.

The first method used in~\cite{GoIv21} for recovering the Brownian motion component   was based on a  construction that curiously  required randomization. To describe that method, we need to introduce some notations. For an $n$-tuple  $\mathcal X=(x_1, \ldots, x_n)$ of  real numbers that are all different from each other  (this will a.s.\ be the case for all the random samples considered in this note, so without loss of generality we will assume in what follows that they all do have this property, omitting ``a.s.'' in the respective relations), denote by  $R_k (\mathcal X):= \sum_{j=1}^n \ind (x_j\le x_k)$  the rank of $x_k$ in~$\mathcal X$, $k= 1,\ldots, n$, and by  $(\mathcal X)_j$ and $[  \mathcal X]_j$ the  $j$th component of $\mathcal X$ and $j$th order statistic for $  \mathcal X$, respectively, so that $(\mathcal X)_{k}=x_k $ and $[\mathcal X]_{R_k (\mathcal X)}=x_k $. For a random process $\{V_t\}_{t\in [0,1]}$ and $n\ge 1,$ we set
$ \De_{n } V:=(V_{1/n}-V_{0}, V_{2/n}-V_{1/n},\ldots ,V_{1 }-V_{(n-1)/n})\in  \R^n. $

Now assume that $\{W'_t\}_{t\in [0.1]}$ is a standard Brownian motion process that is independent of~$X$ and put 
\[
W^{(n)}_t:= \sum_{i=1}^{\lfloor nt\rfloor } [\De_n  W' ]_{R_i (\De_n X)}, \quad  t\in [0,1].
\]
In words, we first re-order one-step  increments of $W'$ on the grid $k/n,$ $k=0,1,\ldots, n,$ such that the sequence of their ranks is the same as the sequence of the ranks of the increments of~$X$ on the same grid, and then form $W^{(n)}$ as the process of the partial sums of that re-ordered sequence of the increments of $W'.$ 

The following theorem re-states the first part  of the main result in~\cite{GoIv21}. Set $\be^*:=\inf\big\{p>0:\int_{(-1,1)}|x|^p\Pi (dx)<\infty \big\},$ where $\Pi$ is the jump measure of~$Y$. 

\begin{theo}
	 \label{T1}
For any $p\in (\be^*,2]\cup \{2\},$ as $n\to\infty,$  
\begin{align}
\label{ConvT1}
\sup_{t\in [0,1]} \big|W_t - W^{(n)}_t - (W_1 - W^{(n)}_1)t \big|  =o_P (n^{-1/2+p/4}) . 
\end{align}
\end{theo}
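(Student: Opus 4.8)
\medskip
\noindent\textbf{A proof proposal.} The plan is to evaluate everything at the grid points $t=k/n$ and to reduce the whole statement to a single second-moment estimate. First I would pass from the supremum over $[0,1]$ to the maximum over $t\in\{0,1/n,\dots,1\}$: since $W^{(n)}$ is constant on each $[k/n,(k+1)/n)$ while $t\mapsto W_t-(W_1-W^{(n)}_1)t$ is continuous, the cost of this reduction is at most the oscillation of $W$ over intervals of length $1/n$, which by the L\'evy modulus of continuity is $O_P((n^{-1}\log n)^{1/2})=o_P(n^{-1/2+p/4})$ for every $p>0$. Writing $G_n$ for the distribution function of $X_{1/n}$ and $H_n(x):=n^{-1/2}\Phi^{-1}(G_n(x))$, I would then split, for each $k$,
\begin{align*}
W_{k/n}-W^{(n)}_{k/n}=\sum_{i=1}^{k}\big((\De_n W)_i-H_n((\De_n X)_i)\big)+\sum_{i=1}^{k}\big(H_n((\De_n X)_i)-[\De_n W']_{R_i(\De_n X)}\big),
\end{align*}
and pass to the bridge by subtracting $(k/n)$ times the $k=n$ value. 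The first sum is the intrinsic recovery error, independent of the auxiliary process; the second is the price of using the empirical order statistics of $\De_n W'$ in place of theoretical normal quantiles.

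\medskip
For the first sum, set $\xi_i:=(\De_n W)_i-H_n((\De_n X)_i)$. Since $G_n((\De_n X)_i)$ is uniform on $(0,1)$ we have $H_n((\De_n X)_i)\deq(\De_n W)_i\sim N(0,1/n)$, so the $\xi_i$ are i.i.d.\ and centred, and their bridge $\sum_{i\le k}\xi_i-(k/n)\sum_{i\le n}\xi_i$ is a tied-down random walk. Doob's maximal inequality then gives $\exn\sup_k|\,\cdot\,|^2\lesssim n\,\Va\xi_1$, so this sum is $o_P(n^{-1/2+p/4})$ as soon as $\Va\xi_1=o(n^{-2+p/2})$. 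A Gaussian integration by parts in $(\De_n W)_1$, exploiting $(\De_n X)_1=\si(\De_n W)_1+(\De_n Y)_1$ with independent summands, converts this into
\begin{align*}
\Va\xi_1=\frac2n\big(1-\si\,\exn H_n'(X_{1/n})\big),\qquad \si\,\exn H_n'(X_{1/n})=\si\,n^{-1/2}\!\int\frac{g_n(x)^2}{\phi(\Phi^{-1}(G_n(x)))}\,dx,
\end{align*}
with $g_n=G_n'$; the bracketed quantity is the shortfall of the correlation between $H_n(X_{1/n})$ and $(\De_n W)_1$ from $1$, and vanishes exactly when $Y\equiv0$.

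\medskip
For the second sum I would insert the theoretical quantiles $q_j:=n^{-1/2}\Phi^{-1}(j/(n+1))$, writing $[\De_n W']_j=q_j+e'_j$ and using $R_i(\De_n X)/n=\widehat G_n((\De_n X)_i)$ for the empirical distribution function $\widehat G_n$ of $\De_n X$. The summand becomes (up to a negligible $O(1/n)$ continuity correction) a difference of quantile transforms, $n^{-1/2}\big(\Phi^{-1}(G_n)-\Phi^{-1}(\widehat G_n)\big)((\De_n X)_i)$, plus the Gaussian sampling error $-e'_{R_i(\De_n X)}$. The first piece is controlled uniformly in $k$ by the Dvoretzky--Kiefer--Wolfowitz bound $\sup_x|\widehat G_n-G_n|=O_P(n^{-1/2})$ together with a tail estimate on $1/\phi(\Phi^{-1}(\cdot))$ at the extreme ranks; the second, the only place where $W'$ enters, is governed by the closeness of the order statistics of an independent normal sample to its quantile function (the uniform quantile process). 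Both are $O_P(n^{-1/2}(\log n)^{c})=o_P(n^{-1/2+p/4})$, the logarithmic losses being absorbed by running the whole argument with some $p'\in(\be^*,p)$, for which $\int_{(-1,1)}|x|^{p'}\Pi(dx)<\infty$ still holds, and trading $p-p'$ against the logarithm.

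\medskip
The crux, and the step I expect to be the main obstacle, is the variance bound $\Va\xi_1=o(n^{-2+p/2})$. The danger is that the naive linearisation $H_n(\si w+y)\approx w+y/\si$ suggests $\xi_1\approx-(\De_n Y)_1/\si$, whose summation would leave the non-vanishing bridge of $Y/\si$; equally misleading is the ``variance-inflation'' heuristic that treats $X_{1/n}$ as Gaussian with variance $\si^2/n+\Va(Y_{1/n})$ and predicts a spurious rescaling of $W$. Recovery succeeds only because the \emph{exact} marginal identity $H_n(X_{1/n})\sim N(0,1/n)$ and the genuine non-Gaussianity of the jumps force $H_n$ to strip off the jump tails and return the Gaussian core, and the entire task is to quantify the surviving residual. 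I would split the jump increment at the Gaussian scale $n^{-1/2}$. On $\{|(\De_n Y)_1|>n^{-1/2}\}$ the variable $X_{1/n}$ is an outlier, $H_n(X_{1/n})$ saturates at the extreme-quantile scale $n^{-1/2}(\log n)^{1/2}$, so $\xi_1^2\lesssim n^{-1}\log n$, while $\pr(|(\De_n Y)_1|>n^{-1/2})\lesssim n^{p/2}\,\exn|(\De_n Y)_1|^p\lesssim n^{p/2-1}$ by Markov's inequality and the standard moment bound for small-jump L\'evy increments; this contributes $O(n^{p/2-2}\log n)$. The hard part is the complementary event, where the sub-$n^{-1/2}$ jumps accumulate: here I would work from the correlation form above and the density-quantile functional $g_n(G_n^{-1}(\cdot))$, showing that the first-order term cancels and the remainder is controlled by $\int_{|x|\le n^{-1/2}}|x|^p\Pi(dx)\le n^{-(2-p)/2}\int|x|^p\Pi(dx)$, again of order $n^{p/2-2}$. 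Summing, $n\,\Va\xi_1=O(n^{p/2-1}\cdot\mathrm{polylog})$, which the slack $p'<p$ upgrades to $o(n^{-1+p/2})$, as required.
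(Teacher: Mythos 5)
You should first note that this paper does not actually prove Theorem~\ref{T1}: it is quoted from \cite{GoIv21}, where the proof is a rank-matching argument --- decompose $(\De_n W)_k-[\De_n W']_{R_k(\De_n X)}$ into the rank-mismatch term $(\De_n W)_k-[\De_n W]_{R_k(\De_n X)}$ (this is where $p$ and the jump measure enter, via showing that the ranks of $\De_n X$ and $\De_n W$ essentially agree) plus a comparison of the order statistics of two independent Gaussian samples. Your route is genuinely different: you replace the rank-mismatch analysis by the quantile-transform coupling $H_n=n^{-1/2}\Phi^{-1}\circ G_n$, and your supporting reductions are correct as far as they go --- the grid reduction via the L\'evy modulus, the fact that the $\xi_i$ are i.i.d.\ and centred with $H_n((\De_n X)_i)\sim N(0,1/n)$ exactly, the Stein identity $\Va\,\xi_1=\frac2n\bigl(1-\si\,\exn H_n'(X_{1/n})\bigr)$, and the Doob reduction of the first sum to the bound $\Va\,\xi_1=o(n^{-2+p/2})$. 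But that variance bound, which you yourself flag as the crux, is precisely where the entire content of the theorem sits, and it is not proved. Your large-jump step can be repaired (two holes: $\exn|(\De_n Y)_1|^p$ may be infinite, since no moment condition is imposed on jumps of size $\ge 1$, and for $p<1$ a drift term of size $n^{-1}$ already ruins the bound $\exn|(\De_n Y)_1|^p\lesssim n^{-1}$, so one must first truncate big jumps and re-centre; also ``saturation at scale $n^{-1/2}(\log n)^{1/2}$'' is not literally true because $\Phi^{-1}$ is unbounded, though it does follow from $H_n(X_{1/n})\sim N(0,1/n)$ by a truncation argument). The small-jump step, however, is a single unsupported sentence: ``the first-order term cancels and the remainder is controlled by $\int_{|x|\le n^{-1/2}}|x|^p\,\Pi(dx)$.'' Nothing in the proposal establishes this; it requires a quantitative analysis of how convolution with an infinitely divisible small-jump law distorts the Gaussian quantile function, i.e.\ of $1-\si n^{-1/2}\int g_n^2/\f(\Phi^{-1}(G_n))$, and this is exactly the difficulty that the rank argument of \cite{GoIv21} was designed to handle. (A heuristic does suggest your order is right: even if the sub-$n^{-1/2}$ jumps behaved like an additional Gaussian component, their variance $n^{-1}\int_{|x|\le n^{-1/2}}x^2\Pi(dx)\le n^{p/2-2}\int_{(-1,1)}|x|^p\Pi(dx)$ would inflate $\Va\,\xi_1$ by only $O(n^{p/2-2})$; but converting this into a proof needs a stability lemma for the coupling under contamination, which you do not supply.)

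Two further concrete problems. First, your device of absorbing logarithms by passing to $p'\in(\be^*,p)$ is unavailable in the admissible edge case $p=2=\be^*$ of the statement, where no such $p'$ exists. Second, in your treatment of the second sum, the two pieces you isolate are each $O_P(1)$ at $k=n$ (the endpoint of the quantile-transform piece equals $\sqrt{n}\,\overline{Z}_n\sim N(0,1)$, and that of the sampling-error piece equals $W'_1$), so only their bridged versions are small; moreover DKW plus pointwise linearisation cannot control them, because the weight $1/\f(\Phi^{-1}(u))$ is non-integrable near $u\in\{0,1\}$, so the naive term-by-term bound diverges. The correct tools here are exchangeability plus Kallenberg's partial-sum theorem \cite{Ka05} and the $L^2$ quantile bounds of \cite{BeFo20} --- precisely the machinery this note deploys for Theorem~\ref{T2} --- rather than uniform empirical-process bounds. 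In short: attractive and genuinely different architecture, sound reductions, but the decisive lemma is asserted, not proved.
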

 
The natural question that arises here is about the role of the   independently sampled process~$W'$: why do we need this auxiliary random object? Does it necessarily need to be a standard Brownian motion? It it possible to modify the suggested method to avoid using any auxiliary independent random processes? 

Simulations showed that the described scheme still works when  $W'$ is an independent  fractional Brownian motion process with an arbitrary Hurst parameter $H\in (0,1)$ (one just needs to scale the process so that the marginal distributions of the components of $\De_n W'$ would be the same as for $\De_n W$). This observation suggested that the true role of the auxiliary process $W'$ is just to provide approximations to the normal quantiles and that one can ``recover'' $W$ using this kind of approach without sampling any independent random process. We show in the present note  that this is the case indeed. 

Denote by $\Phi$ the standard normal distribution function, by $\overline{\Phi}  :=1 - \Phi   $ the distribution tail of~$\Phi,$ by $\f$ the density of~$\Phi$, and by $Q:=\Phi^{-1}$ the standard normal quantile function. For $ u_{n,k}:=\frac{k}{n+1},$ $ n\ge 1,$ $1\le k\le n,$ set 
\begin{align}
\label{TildeW}
  \widetilde W^{(n)}_t:= n^{-1/2}\sum_{i=1}^{\lfloor nt\rfloor }Q(u_{n,R_i (\De_n X)}), \quad  t\in [0,1].
 \end{align}
 
Our main result is the following theorem. 
\begin{theo}
 \label{T2}
The assertion of Theorem~\ref{T1} remains true if the process  $W^{(n)}$ is replaced in it with~$\widetilde W^{(n)}.$ 
\end{theo}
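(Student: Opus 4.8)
The plan is to deduce Theorem~\ref{T2} from Theorem~\ref{T1} by a coupling-and-comparison argument, so that the delicate pathwise estimates behind Theorem~\ref{T1} can be used as a black box. First I would fix, on a possibly enlarged probability space, a standard Brownian motion $W'$ independent of~$X$ and form $W^{(n)}$ as in the Introduction. Writing $\zeta_l:=n^{1/2}(W'_{l/n}-W'_{(l-1)/n})$, $l=1,\dots,n$, for the i.i.d.\ standard normal normalized increments of~$W'$ and $\xi_{(1)}\le\cdots\le\xi_{(n)}$ for their order statistics, one has $[\De_nW']_k=n^{-1/2}\xi_{(k)}$, whence, with $R_i:=R_i(\De_nX)$ and $\eta_j:=\xi_{(j)}-Q(u_{n,j})$,
\[
W^{(n)}_t-\widetilde W^{(n)}_t = n^{-1/2}\sum_{i=1}^{\lfloor nt\rfloor}\big(\xi_{(R_i)}-Q(u_{n,R_i})\big)=:n^{-1/2}\sum_{i=1}^{\lfloor nt\rfloor}\eta_{R_i}.
\]
By the triangle inequality the left-hand side of~\eqref{ConvT1} with $\widetilde W^{(n)}$ is bounded by the same expression with $W^{(n)}$ plus $\sup_{t}|G_n(t)-tG_n(1)|$, where $G_n:=W^{(n)}-\widetilde W^{(n)}$; since the former is $o_P(n^{-1/2+p/4})$ by Theorem~\ref{T1}, it suffices to show $\sup_{t\in[0,1]}|G_n(t)-tG_n(1)|=o_P(n^{-1/2+p/4})$ for every admissible~$p$.

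The crucial structural observation is that $W'\perp X$ gives $(\eta_1,\dots,\eta_n)\perp(R_1,\dots,R_n)$, while, the increments $\De_nX$ being i.i.d.\ with a continuous (as $\si>0$) distribution, the rank vector $R$ is uniformly distributed over the symmetric group. Hence, conditionally on the $\eta_j$'s, the partial sums $S_m:=\sum_{i=1}^m\eta_{R_i}$ are exactly the successive sums of a simple random sample drawn \emph{without replacement} from the finite population $\{\eta_1,\dots,\eta_n\}$, and $G_n(t)-tG_n(1)$ equals $n^{-1/2}\big(S_{\lfloor nt\rfloor}-\tfrac{\lfloor nt\rfloor}{n}S_n\big)$ up to the negligible discretization term $n^{-1/2}(\lfloor nt\rfloor/n-t)S_n$. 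I would control the bridge by a maximal inequality for sampling without replacement (of Doob type for the martingale $m\mapsto(S_m-\tfrac mnS_n)/(n-m)$, or Serfling's inequality), which yields
\[
\exn\Big[\sup_{0\le m\le n}\big(S_m-\tfrac mnS_n\big)^2\,\Big|\,\eta\Big]\le C\sum_{j=1}^n(\eta_j-\bar\eta)^2,\qquad \bar\eta:=\tfrac1n\sum_{j=1}^n\eta_j.
\]
The endpoint term is lower order because $\sum_{j=1}^nQ(u_{n,j})=0$ (by the antisymmetry $Q(u)=-Q(1-u)$ together with $u_{n,j}+u_{n,n+1-j}=1$), so that $S_n=\sum_j\xi_{(j)}=\sum_l\zeta_l=O_P(n^{1/2})$ and the discretization term is $O_P(n^{-1})$.

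Everything thus reduces to the single moment estimate $\exn\sum_{j=1}^n\eta_j^2=n^{o(1)}$. In the bulk the quantile transform gives $\eta_j\approx Q'(u_{n,j})(U_{(j)}-u_{n,j})$ with $Q'(u)=1/\f(Q(u))$ and $U_{(j)}$ uniform order statistics, so $\exn\eta_j^2\approx n^{-1}u_{n,j}(1-u_{n,j})/\f(Q(u_{n,j}))^2$ and the bulk contribution is $O(1)$. In the tails I would use the Mills-ratio asymptotics $\f(Q(u))\sim|Q(u)|\,u$ as $u\downarrow0$ (mirrored at $u\uparrow1$) to bound the contribution by a multiple of $\sum_{j}\big(j\log(n/j)\big)^{-1}=O(\log\log n)$, while for the genuinely extreme indices the direct bounds $\Va \xi_{(1)},\ \Va \xi_{(n)}=O(1/\log n)$ keep the corresponding terms negligible. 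Consequently $\sum_j(\eta_j-\bar\eta)^2=O_P(\log\log n)$, and combining this with the maximal inequality gives $\sup_t|G_n(t)-tG_n(1)|=O_P(n^{-1/2}\sqrt{\log\log n})=o_P(n^{-1/2+p/4})$ for every $p>0$, completing the reduction.

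The main obstacle is precisely this last moment estimate near the extreme order statistics, where the Taylor/quantile approximation $\eta_j\approx Q'(u_{n,j})(U_{(j)}-u_{n,j})$ degrades and $1/\f(Q(u))^2$ blows up; the remedy is to split off a block of extreme indices, bound their contribution by the known variances of extreme normal order statistics, and glue the regimes together via Mills-ratio estimates. I expect that only the sub-polynomial bound $\exn\sum_j\eta_j^2=n^{o(1)}$ (rather than any sharp rate) is needed, which makes the argument robust, and that the sampling-without-replacement maximal inequality, while standard, must be applied in its conditional form in order to exploit the independence of $R$ from the $\eta_j$'s.
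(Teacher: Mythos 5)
Your proposal is correct in outline, but it takes a genuinely different route from the paper's. The paper never invokes Theorem~\ref{T1} as a black box: it works directly with the Brownian component $W$ of $X$ itself, writing the bridge error on the grid as partial sums of $\eta_{n,k}=(\De_n W)_k-n^{-1/2}Q(u_{n,R_k(\De_n X)})-n^{-1}W_1$ and splitting this into $\widehat\eta_{n,k}=(\De_n W)_k-[\De_n W]_{R_k(\De_n X)}$, whose maximal partial sums are $o_P(n^{-1/2+p/4})$ by an intermediate estimate taken from inside the proof in~\cite{GoIv21} (p.~2425), plus $\widetilde\eta_{n,k}=[\De_n W]_{R_k(\De_n X)}-n^{-1/2}Q(u_{n,R_k(\De_n X)})-n^{-1}W_1$, which compares ordered increments of $W$ with normal quantiles. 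Since the ranks of $\De_n X$ are \emph{not} independent of $\De_n W$, the paper controls the $\widetilde\eta$-part by an exchangeability argument (Kallenberg's theorem on partial-sum processes of exchangeable arrays) combined with the bound $\sum_k\widetilde\eta_{n,k}^2=O_P(n^{-1}\ln\ln n)$, the latter obtained by citing~\cite{BeFo20} for the $W_2$ distance between the Gaussian empirical and true quantile functions, plus a deterministic lemma giving $\int_0^1(Q-Q_n)^2\,du=O(n^{-1})$. Your coupling instead uses Theorem~\ref{T1} as stated and compares $W^{(n)}$ with $\widetilde W^{(n)}$; this buys you genuine independence of $(\eta_j)$ from the ranks, so the conditional sampling-without-replacement structure and the Doob/Serfling maximal inequality apply cleanly (a quantitative substitute for the paper's qualitative exchangeability lemma), and your endpoint bookkeeping ($\sum_j Q(u_{n,j})=0$, hence $S_n=O_P(n^{1/2})$ and a negligible discretization term) is correct. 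Reusing the published statement of Theorem~\ref{T1} rather than an estimate buried in its proof is arguably more modular; the price is the enlargement of the probability space, which is harmless since the law of the quantity being bounded does not change.

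One caveat: both routes ultimately hinge on the same quantity, $\sum_j(\xi_{(j)}-Q(u_{n,j}))^2=O_P(\ln\ln n)$, and this is where your plan carries essentially all of its technical weight. The estimate is true, and your bulk/tail/extreme split has the right shape (the Mills-ratio regime indeed yields $\sum_j(j\log(n/j))^{-1}=O(\log\log n)$, which is where the $\ln\ln n$ comes from), but making it rigorous requires controlling the \emph{bias} $\exn\xi_{(j)}-Q(u_{n,j})$ as well as the variance in the intermediate and extreme regimes, and the Taylor approximation $\eta_j\approx Q'(u_{n,j})(U_{(j)}-u_{n,j})$ needs uniform error control there. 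In effect you would be re-deriving (an upper-bound version of) the known exact rate for $\exn W_2^2$ between the empirical and true Gaussian distributions; citing that result, as the paper does via~\cite{BeFo20} together with its Lemma~\ref{L1} for the grid term, would let you skip this step entirely.
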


\section{Proofs}

\begin{proof}[Proof of Theorem~\ref{T2}] As in the proof of Theorem~\ref{T1}  in~\cite{GoIv21}, we will start  with the  observation that 
\begin{align}
\label{grid}
\sup_{t\in [0,1]} |W_t - W_{\lfloor nt\rfloor/n}|
 =O_P \big(n^{-1/2}(\ln n)^{1/2}\big),
\end{align}
which is an immediate consequence of the L\'evy's modulus of continuity theorem. 
Therefore, in the problem of bounding the error in the version of~\eqref{ConvT1} with $\widetilde W^{(n)},$ we only need to consider the maximum of the absolute deviations  on the grid $t= \frac{i}n,$ $1\le i\le n .$

To this end, we observe  that $\widetilde W^{(n)}_1=0$ since $Q(\frac12+h)+Q(\frac12-h)=0,$ $h\in [0,\frac12),$    and hence, letting
\begin{align*}
\eta_{n,k}:  =(\De_n W)_k- n^{-1/2} Q(u_{n,R_k (\De_n X)}) -n^{-1}W_1,
\end{align*}
one has 
\begin{align}
\label{WW}
W_{i/n} -\widetilde W^{(n)}_{i/n} - (W_{1} -\widetilde W^{(n)}_{1} )i/n
 = \sum_{k=1}^ i \eta_{n,k}.
\end{align}
Next, similarly to the decomposition of $\xi_{ni}$ on p.\,2422 in~\cite{GoIv21}, we write $\eta_{n,k}= \widetilde\eta_{n,k}+\widehat \eta_{n,k},$ where
\begin{align*}
\widetilde \eta_{n,k}:& = [\De_n W]_{ R_k (\De_n X)} - n^{-1/2} Q(u_{n,R_k (\De_n X)}) -n^{-1}W_1,
\\
\widehat  \eta_{n,k}:& =(\De_n W)_k - [\De_n W]_{  R_k (\De_n X)}
\end{align*}
That 
\begin{align}
\label{hat}
\max_{1\le i\le n} \bigg|\sum_{k=1}^i \widehat\eta_{n,k}\bigg| = o_P (n^{-1/2+p/4})  \quad\mbox{as \ } n\to\infty 
\end{align} 
was proved on p.\,2425 in~\cite{GoIv21}. To complete the proof of our theorem, we will now show that
\begin{align}
\label{tilde}
\max_{1\le i\le n} \bigg|\sum_{k=1}^i \widetilde \eta_{n,k}\bigg|
= O_P \big(  n^{-1/2 }(\ln\ln n)^{1/2 }\big)  \quad\mbox{as \ } n\to\infty.
\end{align}
First note that it is not hard to verify that~\eqref{tilde} is equivalent to the assertion that, for any positive sequence $\ep_n\to 0,$ 
\begin{align}
\label{tildes}
\max_{1\le i\le n} \bigg|\sum_{k=1}^i \widetilde \eta_{n,k}\bigg|
  = o_P \big(\ep_n^{-1} n^{-1/2 }(\ln\ln n)^{1/2 }\big)  \quad\mbox{as \ } n\to\infty.
\end{align} 
Further, it is easy to see that the random variables $ \widetilde \eta_{n,1}, \ldots,  \widetilde \eta_{n,n}$ are  exchangeable and $\sum_{k=1}^n \widetilde \eta_{n,k} =0$. Therefore, setting $\gamma_{n,k}:=\ep_n n^{ 1/2 }(\ln\ln n)^{-1/2 }\widetilde \eta_{n,k}, $ $k=1,\ldots, n,$  the desired relation~\eqref{tildes} (and hence~\eqref{tilde}) immediately follows from Lemmata~\ref{L00} and~\ref{L0} below, the latter implying that $\sum_{k=1}^n\gamma_{n,k}^2 =O_P(\ep_n^2)=o_P(1).$  Now the assertion of Theorem~\ref{T2} follows from  representation~\eqref{WW} and relations~\eqref{grid}, \eqref{hat} and~\eqref{tilde}.\end{proof}  

\begin{lemo}
	\label{L00}
	Let $\ga_{n,1}, \ga_{n,2},\ldots,   \ga_{n,n}$, $n\ge 1,$ be a triangular array of random variables that are exchangeable in each row and such that $\sum_{k=1}^n \ga_{n,k} =0$ a.s.\ and, 
	as $n\to \infty,$
\begin{align}
\label{Squares}
\sum_{k=1}^n \ga_{n,k}^2 =  o_P (1).
\end{align}
Then $\max_{1\le i \le n} \big|\sum_{k=1}^i \ga_{n,k}\big|
    = o_P  (1).$
\end{lemo}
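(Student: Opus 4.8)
The plan is to reduce the statement to a maximal inequality for sampling without replacement and then exploit a hidden martingale structure. Let $\Gi$ be the $\si$-algebra generated by the unordered collection $\{\ga_{n,1},\dots,\ga_{n,n}\}$. By exchangeability, conditionally on $\Gi$ the row $(\ga_{n,1},\dots,\ga_{n,n})$ is a uniformly random permutation of fixed reals $a_1,\dots,a_n$ satisfying $\sum_k a_k=0$ (since the row sum vanishes) and with $\Gi$-measurable sum of squares $V:=\sum_k a_k^2=\sum_k\ga_{n,k}^2$, which by hypothesis~\eqref{Squares} is $o_P(1)$. Writing $S_i:=\sum_{k=1}^i\ga_{n,k}$, so that $S_0=S_n=0$, the task becomes a maximal inequality for partial sums of a sample drawn without replacement from a finite population summing to zero, and I would aim to show that $\exn\big[\max_{1\le i\le n}S_i^2\mid\Gi\big]\le C\sum_k\ga_{n,k}^2$ for some absolute constant $C$.

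The engine of the argument is the observation that these partial sums carry a martingale structure. With $\Fi_i:=\si(\ga_{n,1},\dots,\ga_{n,i})$, the conditional mean of the next draw is the average of the remaining values, $\exn[\ga_{n,i+1}\mid\Fi_i]=-S_i/(n-i)$, and a direct computation then gives $\exn[S_{i+1}\mid\Fi_i]=S_i(n-i-1)/(n-i)$, so that $N_i:=S_i/(n-i)$, $0\le i\le n-1$, is a martingale. Moreover, by the reflection symmetry of a uniform permutation under $k\mapsto n+1-k$ one has $S_i\deq-S_{n-i}$ as processes, conditionally on $\Gi$, so the maxima of $|S_i|$ over $\{1\le i\le\lfloor n/2\rfloor\}$ and over $\{\lceil n/2\rceil\le i\le n\}$ have the same conditional law and it suffices to bound the former.

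On the range $i\le\lfloor n/2\rfloor$ one has $n-i\ge n/2$, hence $\max_{i\le\lfloor n/2\rfloor}|S_i|\le n\max_{i\le\lfloor n/2\rfloor}|N_i|$, and Doob's $L^2$ maximal inequality applied to $N_i$ gives $\exn[\max_{i\le m}N_i^2\mid\Gi]\le4\,\Va(N_m\mid\Gi)$ with $m=\lfloor n/2\rfloor$. The sampling-without-replacement variance identity (with zero population mean) $\Va(S_i\mid\Gi)=\frac{i(n-i)}{n(n-1)}\sum_k\ga_{n,k}^2$ yields $\Va(N_m\mid\Gi)=\Va(S_m\mid\Gi)/(n-m)^2\le\frac{1}{n(n-1)}\sum_k\ga_{n,k}^2$, so that $\exn[\max_{i\le m}S_i^2\mid\Gi]\le n^2\cdot4\,\Va(N_m\mid\Gi)\le8\sum_k\ga_{n,k}^2$ for $n\ge2$; combining the two halves delivers the claimed conditional bound.

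Finally, conditional Chebyshev turns this into $\pr\big(\max_{1\le i\le n}|S_i|>\ep\mid\Gi\big)\le C\ep^{-2}\sum_k\ga_{n,k}^2$; the right-hand side is bounded by $1$ and, being a fixed multiple of $\sum_k\ga_{n,k}^2$, tends to $0$ in probability by~\eqref{Squares}, so taking expectations and using bounded convergence gives $\pr(\max_i|S_i|>\ep)\to0$ for every $\ep>0$, which is exactly $\max_{1\le i\le n}\big|\sum_{k=1}^i\ga_{n,k}\big|=o_P(1)$. I expect the main obstacle to be the maximal inequality itself: the partial sums $S_i$ are neither a martingale nor a sum of independent terms, and the crucial trick is to divide by $n-i$ (equivalently by $i$ in the reverse direction) to expose the martingale, and then to tame the resulting weight by restricting to half the index range via the reflection symmetry.
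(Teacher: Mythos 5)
Your proof is correct, and it takes a genuinely different route from the paper. The paper disposes of Lemma~\ref{L00} in one stroke by citing Theorem~3.13 in Kallenberg (2005) on convergence of exchangeable partial-sum processes: the characteristic triple converges to that of the identically zero process (using that $\max_{1\le k\le n}|\ga_{n,k}|=o_P(1)$ follows from~\eqref{Squares}), whence the uniform convergence. You instead give a self-contained elementary argument: conditioning on the exchangeable $\si$-algebra $\Gi$ reduces the row to sampling without replacement from a zero-sum population; the normalized sums $N_i=S_i/(n-i)$ form a martingale (your formula $\exn[\ga_{n,i+1}\mid\Fi_i]=-S_i/(n-i)$ is justified by averaging the identical conditional expectations of the remaining coordinates, and it persists when the filtration is augmented by $\Gi$, which is what the conditional application of Doob's inequality formally requires); Doob's $L^2$ inequality together with the without-replacement variance identity then gives $\exn\big[\max_{i\le \lfloor n/2\rfloor}S_i^2\mid\Gi\big]\le 8\sum_k\ga_{n,k}^2$, the reflection symmetry $\{S_i\}\deq\{-S_{n-i}\}$ (valid precisely because $S_n=0$) handles the other half of the index range, and conditional Chebyshev plus bounded convergence finish the job. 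Keeping the bound conditional on $\Gi$ until the very last step is essential and you do it correctly: since $\sum_k\ga_{n,k}^2$ is only assumed $o_P(1)$, with no moment control, one cannot take unconditional expectations of the variance bound. As for what each approach buys: the paper's proof is a one-liner but rests on heavy machinery, while yours is longer but elementary and, importantly, quantitative --- your conditional maximal inequality $\pr\big(\max_{1\le i\le n}|S_i|>\ep\mid\Gi\big)\le C\ep^{-2}\sum_k\ga_{n,k}^2$ is strictly stronger than the lemma. In fact, applied directly to $\widetilde\eta_{n,k}$ it yields $\max_{1\le i\le n}\big|\sum_{k=1}^i\widetilde\eta_{n,k}\big|=O_P\big(\big(\sum_k\widetilde\eta_{n,k}^2\big)^{1/2}\big)=O_P\big(n^{-1/2}(\ln\ln n)^{1/2}\big)$ by Lemma~\ref{L0}, which would let the author obtain~\eqref{tilde} in one step, bypassing the reduction to~\eqref{tildes} and the $\ep_n$-rescaling device used in the proof of Theorem~\ref{T2}.
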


The assertion of Lemma~\ref{L00} is an immediate consequence of Theorem~3.13 in~\cite{Ka05} on convergence of partial sums processes. In our case,  the limiting process in that theorem is identically equal to zero, the convergence of characteristic triples following from the assumptions and the obvious observation that $ \max_{1\le k\le n}  |  \ga_{n,k} |=  o_P (1)$ from~\eqref{Squares}.

\begin{lemo}
	\label{L0}
As $n\to \infty,$
	\[
	\sum_{k=1}^n\widetilde  \eta_{n,k}^2 = O_P \big(n^{-1 }  \ln \ln n \big). 
	\]
\end{lemo}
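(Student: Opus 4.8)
The plan is to reduce the lemma to a deterministic‑in‑$X$ order‑statistic estimate for the Gaussian increments and then bound an expectation. First I would exploit that $k\mapsto R_k(\De_n X)$ is a bijection of $\{1,\dots,n\}$ onto itself, so that $\sum_{k=1}^n\widetilde\eta_{n,k}^2$ equals the same sum indexed by the order‑statistic index $j=R_k(\De_n X)$, which eliminates any dependence on $X$. Writing $Z_k:=n^{1/2}(\De_n W)_k$ (i.i.d.\ standard normal, with order statistics $Z_{(1)}\le\cdots\le Z_{(n)}$), and using $[\De_n W]_j=n^{-1/2}Z_{(j)}$ and $n^{-1}W_1=n^{-1/2}\bar Z$ with $\bar Z:=n^{-1}\sum_k Z_k$, I obtain
\[
\sum_{k=1}^n\widetilde\eta_{n,k}^2=n^{-1}\sum_{j=1}^n\bigl(Z_{(j)}-Q(u_{n,j})-\bar Z\bigr)^2 .
\]
Expanding the square and using the symmetry $\sum_j Q(u_{n,j})=0$ together with $\sum_j Z_{(j)}=\sum_k Z_k=n\bar Z$, the cross term cancels one $\bar Z$‑term and leaves $\sum_j(Z_{(j)}-Q(u_{n,j}))^2-n\bar Z^2$. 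Since $n\bar Z^2\ge0$, it therefore suffices to prove
\[
\sum_{j=1}^n\bigl(Z_{(j)}-Q(u_{n,j})\bigr)^2=O_P(\ln\ln n).
\]

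By Markov's inequality it is enough to bound the expectation of this last sum by $O(\ln\ln n)$. To estimate a single term I would use $Z_{(j)}\deq\Phi^{-1}(U_{(j)})$, where $U_{(1)}\le\cdots\le U_{(n)}$ are the order statistics of a uniform sample, with $\exn U_{(j)}=u_{n,j}$ and $\Va U_{(j)}\asymp u_{n,j}(1-u_{n,j})/n$, together with the mean value theorem $Z_{(j)}-Q(u_{n,j})=(U_{(j)}-u_{n,j})/\f(Q(\theta_j))$ for some $\theta_j$ lying between $U_{(j)}$ and $u_{n,j}$. Feeding in the boundary asymptotics $\f(Q(u))\sim u\sqrt{2\ln(1/u)}$ as $u\downarrow0$ (and its mirror image at $1$) should give, for $m_j:=j\wedge(n+1-j)$, the uniform bound $\exn(Z_{(j)}-Q(u_{n,j}))^2=O\bigl((m_j\ln(en/m_j))^{-1}\bigr)$. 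Summing is where the rate is produced: $\sum_{j=1}^n (m_j\ln(en/m_j))^{-1}\asymp\sum_{1\le j\le n/2}(j\ln(n/j))^{-1}\asymp\ln\ln n$, with the bulk of the contribution coming from the intermediate indices rather than the few extreme ones.

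The main obstacle is to justify this linearization uniformly in $j$. The point $\theta_j$ sits between $U_{(j)}$ and $u_{n,j}$ and, for the extreme indices, can drift substantially closer to an endpoint than $u_{n,j}$ exactly when $U_{(j)}$ fluctuates toward $0$ or $1$; since $1/\f(Q(\cdot))$ diverges there, one cannot simply replace $\f(Q(\theta_j))$ by $\f(Q(u_{n,j}))$ in those terms. For the intermediate and central indices (say $m_j\ge\ln n$) this causes no trouble, because the relative fluctuation of $U_{(j)}$ about $u_{n,j}$ is $O(m_j^{-1/2})\to0$, so $\theta_j$ stays within a constant factor of $u_{n,j}$ away from the boundary with overwhelming probability and the bound above goes through. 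For the remaining $O(\ln n)$ extreme indices I would instead estimate $\exn(Z_{(j)}-Q(u_{n,j}))^2$ directly, using the R\'enyi representation $U_{(j)}\deq S_j/S_{n+1}$ of the uniform order statistics (with $S_k=E_1+\cdots+E_k$ a sum of i.i.d.\ standard exponentials) together with the tail expansion $\Phi^{-1}(x)\sim-\sqrt{2\ln(1/x)}$; this shows that each such term is $O(1/\ln n)$, so their total is $O(1)$ and does not affect the rate. Combining the two ranges yields $\exn\sum_j(Z_{(j)}-Q(u_{n,j}))^2=O(\ln\ln n)$, and Markov's inequality completes the proof.
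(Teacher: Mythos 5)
Your opening reduction coincides with the paper's own: using that $k\mapsto R_k(\De_n X)$ is a.s.\ a permutation and that $\sum_j Q(u_{n,j})=0$, both you and the paper eliminate $X$ and reduce the lemma to showing $\sum_{j=1}^n\bigl(Z_{(j)}-Q(u_{n,j})\bigr)^2=O_P(\ln\ln n)$ after discarding the non-negative $n\bar Z^2$ term. The divergence is in how this bound is obtained. The paper writes the sum as $n\int_0^1(Q^*_n-Q_n)^2\,du$, splits off the discretization error $2n\int_0^1(Q-Q_n)^2\,du$ (handled by the elementary Lemma~3), and outsources the remaining term --- which is exactly $n$ times the squared Wasserstein-2 distance between the empirical and true Gaussian distributions --- to Theorem~1 of Berthet and Fort (2020). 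You instead propose a self-contained, index-by-index moment analysis of Gaussian order statistics. Your target bounds are in fact correct: the uniform estimate $\exn\bigl(Z_{(j)}-Q(u_{n,j})\bigr)^2=O\bigl((m_j\ln(en/m_j))^{-1}\bigr)$ holds, and $\sum_{j\le n/2}(j\ln(n/j))^{-1}\asymp\ln\ln n$ is the right computation; this is essentially how results of Berthet--Fort type are proved, and a completed version would make the note self-contained where the paper relies on a citation.

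However, as written the hardest step is asserted rather than proved, and neither of your two patches closes it. For the intermediate range, showing that $\theta_j$ stays comparable to $u_{n,j}$ ``with overwhelming probability'' does not yield an expectation bound: you must also control the contribution of the exceptional event, on which $1/\f(Q(\theta_j))$ is unbounded; this requires beta/binomial tail estimates combined with, say, Cauchy--Schwarz against $\exn Z_{(j)}^4$, none of which appears in the proposal. For the extreme indices the proposed tool is genuinely too weak: the first-order expansion $\Phi^{-1}(x)\sim-\sqrt{2\ln(1/x)}$ has absolute error of order $\ln\ln(1/x)/\sqrt{\ln(1/x)}$, which is \emph{larger} than the quantity $Z_{(j)}-Q(u_{n,j})=O_P(1/\sqrt{\ln n})$ you are trying to bound, so no conclusion can be drawn from it; one needs the second-order expansion (whose $\ln\ln(1/x)$ and constant terms are common to $\Phi^{-1}(U_{(j)})$ and $Q(u_{n,j})$ and cancel), together with control of tail events such as $U_{(1)}\le n^{-1}(\ln n)^{-2}$, where $\Phi^{-1}(U_{(1)})^2$ can be as large as order $n$. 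These gaps are fillable, but filling them amounts to reproving the upper-bound half of the Berthet--Fort theorem --- precisely the labor the paper's citation is designed to avoid.
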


\begin{proof}[Proof of Lemma~\ref{L0}]

Note that the components of the vector 
	\[
	\mathcal Z_n = (Z_1, \ldots, Z_n):=  n^{1/2} \bigl((\De_n W)_1, \ldots, (\De_n W)_n \bigr)
	\]
are   independent standard normal random variables, and let $\overline{Z}_n:=n^{-1}\sum_{k=1}^n Z_k .$

Setting $\zeta_{n,k}:= [\mathcal Z_n]_{k} -  Q(u_{n,k}),$ $k=1,\ldots, n,$ 
we observe that $\overline{\zeta}_n:=n^{-1}\sum_{k=1}^n \zeta_{n,k} =\overline{Z}_n$ and hence 
\[ \widetilde \eta_{n,k} 
 \equiv  n^{-1/2} ( [\mathcal Z_n]_{R_k (\De_n X)} -  Q(u_{n,R_k (\De_n X)}) - \overline{Z}_n)
 = n^{-1/2} (\zeta_{n,R_k (\De_n X)} -    \overline{\zeta}_n).  
\]
Therefore, 
\[
\sum_{k=1}^n \widetilde \eta_{n,k}^2 
 =  n^{-1} \sum_{k=1}^n (\zeta_{n.k}- \overline{\zeta}_n)^2
  = n^{-1} \sum_{k=1}^n  \zeta_{n.k}^2 - \overline{\zeta}_n^2.
\]
Here $\overline{\zeta}_n^2=\overline{Z}_n^2 \deq n^{-1} Z_1^2 = O_P (n^{-1}). $ Further, denoting by 
\[
Q^*_n(u):=\sum_{k=1}^n [\mathcal Z_n]_k 
 \ind (u\in \mbox{$[\frac{k-1 }n, \frac{k}n)$}),  \quad u\in (0,1), 
\]
the empirical quantile function for $\mathcal Z_n $   and letting
\[
Q_n(u):=\sum_{k=1}^n Q(u_{n,k}) 
 \ind (u\in \mbox{$[\frac{k-1 }n, \frac{k}n)$}),  \quad u\in (0,1),
\] 
one has
\begin{align*}
 n^{-1} \sum_{k=1}^n  \zeta_{n.k}^2
  &=  n^{-1} \sum_{k=1}^n ([\mathcal Z_n]_k- Q(u_{n,k}))^2
   =\int_0^1 (Q^*_n(u) - Q_n (u))^2 du
   \\
   & \le 2 \int_0^1 (Q^*_n(u) - Q  (u))^2 du + 2 \int_0^1 (Q (u) - Q_n (u))^2 du.
\end{align*}
It follows from Theorem~1 in~\cite{BeFo20} that the first term in the second  line    is $O_P (n^{-1}\ln \ln n),$ whereas the second  term  in that line is $O(n^{-1})$ by Lemma~\ref{L1} below. This completes the proof of Lemma~\ref{L0}. 
\end{proof}

\begin{lemo}
 	\label{L1}
 For any $n\ge 1 ,$ one has 
 	\[
 	\int_0^1 (Q(u) - Q_n (u))^2 du \le 3.73 n^{-1} . 
 	\]
\end{lemo}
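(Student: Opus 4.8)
The plan is to exploit the piecewise-constant form of $Q_n$. Since $Q_n(u) = Q(u_{n,k})$ for $u$ in the interval $I_k := [\frac{k-1}{n},\frac{k}{n})$, and since $u_{n,k}=\frac{k}{n+1}$ satisfies $\frac{k-1}{n}\le \frac{k}{n+1}<\frac kn$, i.e.\ $u_{n,k}\in I_k$, I would write
\[
\int_0^1 \big(Q(u) - Q_n(u)\big)^2\,du = \sum_{k=1}^n C_k, \qquad C_k := \int_{I_k}\big(Q(u) - Q(u_{n,k})\big)^2\,du .
\]
The symmetry $Q(1-u) = -Q(u)$ together with $u_{n,\,n+1-k} = 1 - u_{n,k}$ gives $C_k = C_{n+1-k}$, so it suffices to estimate $C_k$ for $k\le \lceil n/2\rceil$ and double. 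The one genuinely delicate feature is that $Q(0^+)=-\infty$, so the boundary interval $I_1$ (and, by symmetry, $I_n$) must be treated separately from the bulk, where $Q$ is bounded and smooth.

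The key tool is a pair of exact antiderivatives. Using $\f'(x) = -x\f(x)$ and $Q'(u) = 1/\f(Q(u))$, and noting that the contributions at $u=0$ vanish, one checks
\[
\int_0^a Q(u)\,du = -\f(Q(a)), \qquad \int_0^a Q(u)^2\,du = a - Q(a)\f(Q(a)), \qquad a\in(0,1).
\]
Expanding $C_k = \int_{I_k}Q^2 - 2Q(u_{n,k})\int_{I_k}Q + \tfrac1n Q(u_{n,k})^2$ and inserting these formulas yields a closed form for each $C_k$ in terms of $x_k := Q(\tfrac kn)$, $Q(u_{n,k})$, and the values $\f(x_k)$. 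Keeping the \emph{exact} integrals here is essential: the crude bound $(a-b)^2\le 2a^2+2b^2$ would split off $\int_{I_1}Q^2$, which is of order $n^{-1}\ln n$, and destroy the cancellation that makes $C_1$ only of order $n^{-1}$.

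I would then bound the closed-form sum by splitting intervals into a bulk part and a boundary part. For the bulk $2\le k\le n-1$, where $Q$ is finite and monotone, I would use $C_k = V_k + \tfrac1n(\bar Q_k - Q(u_{n,k}))^2$, with $\bar Q_k := n\int_{I_k}Q$ and $V_k := \int_{I_k}(Q-\bar Q_k)^2$; the elementary estimates $V_k \le \tfrac1{4n}(x_k - x_{k-1})^2$ (variance at most a quarter of the squared range) and $|\bar Q_k - Q(u_{n,k})| \le x_k - x_{k-1}$ reduce the whole bulk contribution to a constant times $\tfrac1n\sum_{k\ge2}(x_k - x_{k-1})^2$, which is $O(n^{-1})$ (in fact $o(n^{-1})$) once $x_k-x_{k-1}\approx \tfrac1n Q'(\tfrac kn)$ is summed. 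For the boundary interval I would evaluate $C_1$ from its closed form and bound $x_1\f(x_1)$, $\f(x_1)$ and $Q(u_{n,1})$ by sharp two-sided Mills-ratio inequalities for $\overline{\Phi}$, which exhibit the cancellation of the two leading $n^{-1}\ln n$ terms and leave $C_1 = O(n^{-1})$ with an explicit constant.

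The main obstacle is precisely this boundary analysis: because $Q$ is unbounded at $0$, every naive bound on $I_1$ diverges, so one must retain the exact Gaussian integrals and use \emph{sharp} (not merely asymptotic) tail estimates so that the $\ln n$ growth cancels and an explicit constant survives uniformly in $n$. Carefully tracking all constants through the Mills-ratio inequalities, and if necessary checking the finitely many small $n$ by direct evaluation, then yields the stated bound. I expect $3.73$ to be far from optimal (one can see that $n\int_0^1(Q-Q_n)^2\,du$ is only of order $(\ln n)^{-1}$, hence tends to $0$), the slack reflecting the convenient but lossy tail inequalities used to obtain a clean uniform constant.
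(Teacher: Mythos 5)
Your plan has the same skeleton as the paper's proof: the same interval decomposition, the same symmetry reduction, and a separate treatment of the boundary interval via the exact Gaussian antiderivatives and Mills' ratio (this is exactly what the paper does for $k=n$, after first using a monotone-rearrangement lemma to replace $Q(u_{n,n})$ by $Q(1-1/n)$). Your ingredients are individually correct: the identity $C_k=V_k+\tfrac1n(\bar Q_k-Q(u_{n,k}))^2$ holds because the cross term vanishes, the bounds $V_k\le\tfrac1{4n}(x_k-x_{k-1})^2$ and $|\bar Q_k-Q(u_{n,k})|\le x_k-x_{k-1}$ are valid, the antiderivative formulas are right, and the boundary cancellation you describe can indeed be pushed through with two-sided Mills-ratio bounds, yielding $C_1\le \tfrac1n\bigl(1+(|Q(u_{n,1})|-|Q(1/n)|)^2\bigr)$, which is of the same quality as the paper's $J_n\le n^{-1}$.

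The genuine gap is in the bulk, and it is quantitative in a way that your fallback cannot repair. First, to turn ``$x_k-x_{k-1}\approx\tfrac1nQ'(k/n)$ is summed'' into an argument you need an explicit lower bound on $\varphi(Q(u))$ near $u=1$ (the paper's Lemma~3: $1-u\le\sqrt{\pi/2}\,\varphi(Q(u))$), since $Q'$ blows up there; you invoke Mills-ratio estimates only for the boundary interval. Second, once you insert that bound, your per-interval constant is $\tfrac14+1=\tfrac54$, versus the paper's $\tfrac13$ coming from its convexity lemma (Lemma~2), so your bulk bound is $\tfrac{5}{4n}\cdot\tfrac{\pi}{2}\cdot\tfrac{\pi^2}{6}=\tfrac{5\pi^3}{48n}\approx\tfrac{3.23}{n}$ per half, about $\tfrac{6.5}{n}$ after doubling --- already above $\tfrac{3.73}{n}$ for \emph{every} $n$, before the two boundary terms ($\approx n^{-1}$ each) are added. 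Hence the bound you can actually write down from your stated estimates fails for all $n$, not just finitely many small ones, and ``checking small $n$ by direct evaluation'' has nothing to attach to. The only rescue is your parenthetical claim that $\sum_k(x_k-x_{k-1})^2=o(1)$; this is true (it is $O(1/\ln n)$, since the tail increments are of size $\ln(1+1/j)/\sqrt{2\ln(n/j)}$ with $j=n-k$), but proving it \emph{effectively}, with explicit constants and an explicit onset, is precisely the delicate work your proposal asserts rather than carries out. The paper sidesteps all of this: Lemma~2 plus Lemma~3 give $\tfrac{\pi^3}{36n}$ per half, Lemma~4 plus Mills' ratio give $\le n^{-1}$ per boundary term, and the total $\tfrac{\pi^3}{18}+2<3.73$ holds uniformly in $n$ with no asymptotic refinement needed.
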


The proof of this bound uses the following three elementary auxiliary results.

\begin{lemo}
	\label{L2}
	Let $f\in C^1 (a_0, b_0)$ be convex and non-decreasing on $[a,b]\subset (a_0, b_0)$. Then, for any $v_0\in [a,b],$ 
\[
 \int_a^b (f(v)-f(v_0))^2 dv  \le \mbox{$\frac13$} (f'(b))^2 (b-a)^3.
\]
\end{lemo}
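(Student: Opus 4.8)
The plan is to exploit the monotonicity of $f'$ that follows from the hypotheses and thereby reduce the claim to an elementary one-dimensional integral inequality. First I would observe that, since $f$ is non-decreasing on $[a,b]$, one has $f'\ge 0$ there, and since $f$ is convex, $f'$ is non-decreasing on $[a,b]$; consequently $0\le f'(s)\le f'(b)$ for every $s\in[a,b]$. Writing $f(v)-f(v_0)=\int_{v_0}^v f'(s)\,ds$ by the fundamental theorem of calculus, the two-sided bound on $f'$ immediately yields the Lipschitz-type estimate $|f(v)-f(v_0)|\le f'(b)\,|v-v_0|$, valid for all $v,v_0\in[a,b]$.

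Next I would square this inequality and integrate over $v\in[a,b]$ to obtain $\int_a^b (f(v)-f(v_0))^2\,dv\le (f'(b))^2\int_a^b (v-v_0)^2\,dv$, so that everything reduces to bounding the algebraic quantity $\int_a^b (v-v_0)^2\,dv$ uniformly in $v_0\in[a,b]$. Splitting the integral at $v_0$, a direct computation gives $\int_a^b (v-v_0)^2\,dv=\tfrac13\big((v_0-a)^3+(b-v_0)^3\big)$; regarded as a function of $v_0$, this expression is convex and symmetric about the midpoint of $[a,b]$, so it is maximised at an endpoint, where its value equals $\tfrac13 (b-a)^3$. Chaining the two displays then delivers exactly the asserted bound $\tfrac13 (f'(b))^2 (b-a)^3$.

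I do not anticipate a genuine obstacle here, as the argument is elementary. The only points meriting care are the correct use of convexity to guarantee that the slope of $f$ over the whole interval is controlled by the value $f'(b)$ at the \emph{right} endpoint $b$, rather than by some interior value, and the brief verification that the worst case for $v_0$ in the integral $\int_a^b (v-v_0)^2\,dv$ occurs at an endpoint of $[a,b]$ and not in its interior.
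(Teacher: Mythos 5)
Your proposal is correct and follows essentially the same route as the paper: the Lipschitz bound $|f(v)-f(v_0)|\le f'(b)|v-v_0|$ via the fundamental theorem of calculus and monotonicity of $f'$, followed by squaring, integrating, and bounding $\int_a^b (v-v_0)^2\,dv$ by $\tfrac13(b-a)^3$. The only cosmetic difference is in the last elementary step, where the paper invokes the inequality $y^3-x^3\le (y-x)^3$ for $x\le 0\le y$ while you argue by convexity and endpoint maximisation; both are equivalent one-line verifications.
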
 
\begin{proof}[Proof of Lemma~\ref{L2}] For $v,v_0\in [a,b],$ one has 
\[
| f(v)-f(v_0)| = \biggl|\int_{v_0}^v f'(s)\, ds \biggr| \le   f'(b) |v-v_0| 
\]	
as $f'(s) \ge 0$ is non-decreasing by assumption. Hence
\begin{align*}
\int_a^b (f(v)-f(v_0))^2 dv  \le (f'(b))^2\int_a^b (v-v_0)^2 dv 
 \le  \mbox{$\frac13$} (f'(b))^2 \big((b-v_0)^3-  (a-v_0)^3),
\end{align*}
completing the proof since clearly $y^3 - x^3 \le (y-x)^3$ for $x\le 0\le y.$\end{proof}

 \begin{lemo}
 	\label{L3}
 	For any  $u\in (0.5, 1),$  one has $1-u \le \sqrt{\pi/2} \f (Q(u)). $ 
 \end{lemo}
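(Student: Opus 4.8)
The plan is to reduce the stated inequality to a classical Gaussian tail bound and then dispatch that bound by an elementary change of variables. First I would set $x := Q(u)$, so that the restriction $u>1/2$ forces $x>0$ and, by the very definition of the quantile function, $1-u=\overline{\Phi}(x)$. Since
\[
\sqrt{\pi/2}\,\varphi(x)=\sqrt{\pi/2}\cdot\tfrac{1}{\sqrt{2\pi}}\,e^{-x^2/2}=\tfrac12 e^{-x^2/2},
\]
the assertion of Lemma~\ref{L3} is equivalent to the one-sided tail estimate
\[
\overline{\Phi}(x)\le \tfrac12\, e^{-x^2/2},\qquad x>0 .
\]

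To prove this I would write $\overline{\Phi}(x)=\tfrac{1}{\sqrt{2\pi}}\int_x^\infty e^{-t^2/2}\,dt$ and substitute $t=x+s$, which yields
\[
\overline{\Phi}(x)=\tfrac{1}{\sqrt{2\pi}}\,e^{-x^2/2}\int_0^\infty e^{-xs-s^2/2}\,ds .
\]
For $x\ge 0$ and $s\ge 0$ one has $e^{-xs}\le 1$, so the remaining integral is bounded by $\int_0^\infty e^{-s^2/2}\,ds=\sqrt{\pi/2}$. Combining these gives $\overline{\Phi}(x)\le \tfrac{1}{\sqrt{2\pi}}\,e^{-x^2/2}\sqrt{\pi/2}=\tfrac12 e^{-x^2/2}$, which is exactly the required bound.

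An equivalent route, if one prefers to avoid the substitution, is to set $g(x):=\tfrac12 e^{-x^2/2}-\overline{\Phi}(x)$ and check that $g(0)=0$, that $g'(x)=e^{-x^2/2}\bigl(\tfrac{1}{\sqrt{2\pi}}-\tfrac{x}{2}\bigr)$ changes sign exactly once (from $+$ to $-$) on $(0,\infty)$, and that $g(x)\to 0$ as $x\to\infty$; monotonicity then forces $g\ge 0$. This is just the statement that the Mills ratio $\overline{\Phi}/\varphi$ attains its maximum over $[0,\infty)$ at the origin.

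I do not expect any genuine obstacle here, as the result is elementary. The only point demanding care is bookkeeping of the constant: the bound is sharp at $x=0$ (equivalently $u\downarrow 1/2$), where $\overline{\Phi}(0)=\tfrac12=\sqrt{\pi/2}\,\varphi(0)$, so the argument must introduce no slack at the boundary. This is precisely why the crude estimate $e^{-xs}\le 1$ is already enough and reproduces the constant $\sqrt{\pi/2}$ on the nose.
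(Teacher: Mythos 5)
Your proof is correct, but it takes a genuinely different route from the paper's. You pass to the variable $x=Q(u)$ and prove the classical Gaussian tail bound $\overline{\Phi}(x)\le\tfrac12 e^{-x^2/2}$ for $x\ge 0$, via the substitution $t=x+s$ and the crude estimate $e^{-xs}\le 1$ (your alternative monotonicity argument for $g(x)=\tfrac12 e^{-x^2/2}-\overline{\Phi}(x)$ is equally sound). The paper instead stays entirely in the $u$-variable: it observes that the two sides agree at the endpoints $u=\tfrac12$ and $u=1$, and that the right-hand side is concave in $u$, since $[\varphi(Q(u))]'=-Q(u)$ and hence $[\varphi(Q(u))]''=-Q'(u)=-\sqrt{2\pi}\,e^{Q^2(u)/2}<0$; as the left-hand side $1-u$ is exactly the chord of $\sqrt{\pi/2}\,\varphi(Q(u))$ over $[\tfrac12,1]$, the inequality follows because a concave function lies above its chords. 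The paper's argument is shorter --- essentially two lines once one differentiates $\varphi(Q(u))$ --- and needs no integral estimates at all. Your argument buys something else: it identifies the lemma for what it really is, namely the statement that the normal Mills ratio $\overline{\Phi}/\varphi$ attains its maximum $\sqrt{\pi/2}$ at the origin, a standard and reusable tail inequality; it also makes explicit why the constant is sharp (equality as $u\downarrow\tfrac12$, i.e.\ $x=0$). Both proofs are tight at the endpoints, as any correct proof must be.
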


\begin{proof}[Proof of Lemma~\ref{L3}] The desired inequality follows from the observation that it turns into equality at the endpoints $u=0.5$ and $u= 1$ and that its RHS is a concave function as $[\f (Q(u))]'' = -\sqrt{2\pi}e^{Q^2(u)/2}<0.$  \end{proof}

\begin{lemo}
	\label{L4}
Assume that $f(v),$ $ v\in [a,b],$ is a non-decreasing function, $v_0\in [a,\frac12 (a+b)].$ Then
\[
I_1: = \int_a^b (f(v)-f(v_0))^2 dv 
 \le  \int_a^b (f(v)-f(a))^2 dv=:I_2. 
\]
\end{lemo}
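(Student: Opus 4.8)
The plan is to compute the difference $I_2-I_1$ directly and show it is non-negative. Writing $c:=f(v_0)$ and $d:=f(a)$, monotonicity of $f$ together with $v_0\ge a$ gives $c\ge d$. Using the elementary identity $(f(v)-d)^2-(f(v)-c)^2=(c-d)\big(2f(v)-c-d\big)$, I would obtain
\[
I_2-I_1=(c-d)\Bigl(2\int_a^b f(v)\,dv-(c+d)(b-a)\Bigr).
\]
Since the prefactor $c-d$ is non-negative, the whole task reduces to verifying that the average of $f$ over $[a,b]$ is at least $\tfrac12\big(f(a)+f(v_0)\big)$, i.e.\ that
\[
\int_a^b f(v)\,dv\ge \frac{b-a}{2}\,\big(f(a)+f(v_0)\big).
\]

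To prove this last inequality I would split the interval at its midpoint $m:=\tfrac12(a+b)$, each half having length $\tfrac{b-a}{2}$. On the left half $[a,m]$ monotonicity gives $f(v)\ge f(a)$, so $\int_a^m f\ge \tfrac{b-a}{2}f(a)$. On the right half $[m,b]$ we have $f(v)\ge f(m)$; and here the hypothesis $v_0\le m$ enters, giving $f(m)\ge f(v_0)$, whence $\int_m^b f\ge \tfrac{b-a}{2}f(v_0)$. Adding the two bounds yields exactly the required inequality, and substituting it back into the displayed formula for $I_2-I_1$ shows that this quantity is a product of two non-negative factors, hence $I_1\le I_2$.

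There is essentially no analytic obstacle here; the only point that must be used with care is the location constraint $v_0\in[a,\tfrac12(a+b)]$, which is precisely what licenses the comparison $f(v_0)\le f(m)\le f(v)$ on the right half and thereby makes the midpoint decomposition go through. It is worth noting that this constraint is genuinely needed rather than a technical convenience: if $v_0$ were allowed close to $b$ with $f$ concentrating its increase near $b$, the prefactor $c-d$ would stay non-negative but the bracketed term could turn negative (equivalently, $f(v_0)$ would overshoot the reflection $2\bar f-f(a)$ of $f(a)$ about the mean $\bar f=\tfrac{1}{b-a}\int_a^b f$), and the inequality would reverse. This confirms that the midpoint hypothesis is exactly the right dividing line for the argument.
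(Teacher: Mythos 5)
Your proof is correct and takes essentially the same route as the paper's: both expand the squares to factor $I_2-I_1=\bigl(f(v_0)-f(a)\bigr)\bigl[2\int_a^b f(v)\,dv-(f(v_0)+f(a))(b-a)\bigr]$ and then verify the bracketed term is non-negative using monotonicity and the hypothesis $v_0\le\frac12(a+b)$. The only cosmetic difference is that you split the integral at the midpoint $\frac12(a+b)$, whereas the paper splits it at $v_0$ and then rearranges algebraically; the two verifications are equivalent one-liners.
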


\begin{proof}[Proof of Lemma~\ref{L4}] Expanding the squares, one has
\begin{align*}
I_2- I_1 &=
 2 (f(v_0)- f(a)) \int_a^b f(v) \, dv + (f^2 (a) - f^2 (v_0))(b-a)
 \\
 & =  (f(v_0)- f(a))\biggl[ 2 \int_a^b f(v) \, dv -(f(v_0)+ f(a))(b-a) \biggr]\ge 0
\end{align*}	
since, due to the monotonicity of~$f$, 
\begin{align*}
\int_a^b f(v) \, dv &=\int_a^{v_0 }\cdots + \int_{v_0 }^b\cdots
\ge 
 f(a)(v_0- a)  +   f  (v_0) (b-v_0)
\\
& =\mbox{$ \frac12 $} f(a)(b- a)  +\mbox{$ \frac12 $} f(v_0)(b- a)  
+  (f(v_0)- f(a)) (\mbox{$ \frac12 $}  (a+b) - v_0),  
\end{align*}
where the last term is non-negative by the assumptions. 	
\end{proof}

\begin{proof}[Proof of Lemma~\ref{L1}] Putting $q_{n,k}:=Q(u_{n,k}),$ $k=1, \ldots, n,$ one has 
\begin{align}
\label{SumQ}
	\int_0^1 (Q(u) - Q_n (u))^2 du  
	=\sum_{k=1}^n  
	  \int_{ (k-1)/n }^{k/n} (Q(u) -q_{n,k})^2 du .
\end{align}

By symmetry, it is enough to bound the terms with $k\ge n/2$ only, assuming for simplicity that $n$ is even. 

As $Q$ is clearly convex and increasing on $(\frac12,1),$ for $n/2<k<n$ we get  by Lemmata~\ref{L2} and~\ref{L3} that 
\begin{align*}
 \int_{ (k-1)/n }^{k/n} (Q(u) -q_{n,k})^2 du
  &\le
   \mbox{$\frac13$} (Q'(k/n)) ^2 n^{-3}
  =\frac{n^{-3}}{3\f^2 (Q(k/n))} \le \frac{ \pi  n^{-1}}{6(n-k)^2}.
\end{align*}
Therefore 
\begin{align}
\label{InnerSum}
	 \sum_{k=n/2+1} ^{n-1} 
\int_{ (k-1)/n }^{k/n} (Q(u) -q_{n,k})^2 du
\le
 \frac{ \pi}{ 6 n }\sum_{k=n/2+1} ^{n-1} \frac{1}{ (n-k)^2}
 \le
 \frac{ \pi}{ 6 n }\sum_{m=1} ^{\infty} \frac{1}{ m^2}=\frac{\pi^3}{36 n}. 
\end{align}

For the last term in the sum on the RHS of~\eqref{SumQ}, setting $q:= Q(1-1/n),$ from Lemma~\ref{L4} we obtain that 
\begin{align*}
J_n:&=  \int_{ (n-1)/n }^{1} (Q(u) -Q(u_{n,n}) )^2 du
\le  \int_{ (n-1)/n }^{1} (Q(u) -q )^2 du
\\
& 
= \int_{ (n-1)/n }^{1}  Q(u) ^2 du -2 q \int_{ (n-1)/n }^{1}  Q(u) du
+q^2 n^{-1}.
\end{align*}
Integrating by parts, we get 
\begin{align*}
  \int_{ (n-1)/n }^{1}  Q(u) ^2 du 
  & =\exn (Z_1^2; Z_1>q)
   = \int_q^\infty  z^2\f (z) dz 
   \\
   & = [-z\f(z)]_q^\infty + \int_q^\infty \f (z) dz 
   = q \f (q) +\overline{\Phi}(q),  
\end{align*}
whereas  
\begin{align*}
\int_{ (n-1)/n }^{1}  Q(u)  du 
& =\exn (Z_1 ; Z_1>q)
= \int_q^\infty  z \f (z) dz 
= - \int_q^\infty d\f (z) 
=  \f (q) .
\end{align*}
Since $\overline{\Phi}(q)=n^{-1} $ and $ q^2 n^{-1}-q \f (q)
= q^2 (\overline{\Phi}(q) - \f(q)/q)<0 $ by the well-known inequality for
the normal  Mills' ratio (see e.g.\ Ch.~VII.1 in~\cite{Fe68}), we conclude that 
\begin{align*}
J_n\le    q \f (q) + n^{-1} - 2  q \f (q)+q^2 n^{-1} 
  \le  n^{-1}. 
\end{align*}
Together with~\eqref{InnerSum} and an elementary bound for the constant $\frac{\pi^3}{18}+2 <3.73$  this completes the proof of Lemma~\ref{L1}. \end{proof}

\end{document}